\newtheorem{thm}[subsection]{Theorem}
\newtheorem{prop}[subsection]{Proposition}
\newtheorem{lem}[subsection]{Lemma}
\newtheorem{rem}[subsection]{Remark}
\newtheorem{coro}[subsection]{Corollary}
\newtheorem{defin}[subsection]{Definition}
\newcommand{\ds}{\displaystyle}
\newcommand{\eps}{\varepsilon}
\newcommand{\Nat}{{\bf N}}
\newcommand{\Prob}{{\bf P}}
\newcommand{\E}{{\bf E}}
\newcommand{\EE}{{\mathcal E}}
\newcommand{\bpm}{\begin{pmatrix}}
\newcommand{\epm}{\end{pmatrix}}
\begin{document}

\title[A P.A. Process Approaching the Rado Graph]{A Preferential Attachment Process Approaching the Rado Graph}
\author{Richard Elwes}
\address{School of Mathematics, University of Leeds, UK}
\email{r.h.elwes@leeds.ac.uk}

\begin{abstract}

We consider a simple Preferential Attachment graph process, which begins with a finite graph, and in which a new $(t+1)$st vertex is added at each subsequent time step $t$, and connected to each previous vertex $u \leq t$ with probability $\frac{d_u(t)}{t}$ where $d_u(t)$ is the degree of $u$ at time $t$. We analyse the graph obtained as the infinite limit of this process, and show that so long as the initial finite graph is neither edgeless nor complete, with probability 1 the outcome will be a copy of the Rado graph augmented with a finite number of either isolated or universal vertices. 

\end{abstract}

\maketitle

\section{Introduction}

Since its introduction by Barab\'asi and Albert in \cite{BA}, the mechanism of Preferential Attachment (PA) has been highly influential amongst scientists seeking to model real-world networks. In PA processes, a new vertex is introduced at each time step, and then connected to each pre-existing vertex with a probability depending on the current degree of that vertex. The study of PA and related processes thus presents a new challenge in the mathematics of random graphs, differing from the classical approach of the Erd\H{o}s-R\'enyi school who principally study structures arising from the following simpler process: at each time step introduce a new vertex, and connect it to each previous vertex with some fixed probability $p$.

Thus, an important question is whether the mathematics of PA processes can be developed to the same advanced level as the Erd\H{o}s-R\'enyi theory. Recall that this theory has two distinct facets. First, researchers have analysed in great detail the finite graphs which emerge. Here, questions of interest include the emergence of a giant component and the degree distribution of the vertices, and analyses are typically highly sensitive to the value of $p$. See \cite{Bol} for a comprehensive discussion of such matters.

The second angle of approach is to understand the infinite limit of the process. In this case, a remarkable theorem of Erd\H{o}s and R\'enyi guarantees that, irrespective of the value of $p \in (0,1)$, the resulting graph will with probability $1$ be isomorphic to the following important structure:

\begin{defin} \label{defin:Rado}
\emph{The Rado graph} is a graph on a countably infinite set of vertices satisfying the following: given any disjoint finite sets of vertices $U$ and $V$, there exists a vertex $v$ connected to each vertex in $V$ and none in $U$. 
\end{defin}

This graph exhibits many interesting properties. To start with (and justifying our use of the definite article) it is \emph{countably categorical}, meaning that any two graphs obeying the definition are automatically isomorphic. Beyond this, all finite and countably infinite graphs isomorphically embed in it. For these and other reasons, this structure is an object of central importance in several branches of mathematics. See \cite{C} for a recent survey.

This division between the finite and infinite is equally applicable to the study of PA processes. On the finite side, a good amount of progress has been made. Notably, in \cite{DM} and \cite{DM2}, Dereich and M\"{o}rters analyse the following family of Preferential Attachment Processes: at stage $t$ we have a directed graph $\textrm{DG}(t)$ into which a new vertex $t+1$ is introduced. For each previous vertex $u$, an edge from $t+1$ to $u$ is introduced independently with probability $\frac{f(I_u(t))}{t}$ where $f$ is a fixed sublinear function, and $I_u(t)$ is the indegree of $u$ in $\textrm{DG}(t)$. The model considered in this paper (Definition \ref{defin:ourprocess} below) is essentially a linear version of theirs, although we consider our structures as \emph{undirected} graphs. Dereich and M\"{o}rters successfully extract a great deal of valuable information about this process, including the distributions of in- and outdegrees.

This follows earlier work from Bollob\'as, Riordan, Spencer, and Tusn\'ady (\cite{Bol2}) and  Bollob\'as and Riordan (\cite{Bol3}) analysing networks arising from a preferential attachment process in which a fixed number $m$ of edges are added at each stage, and connected to previous vertices with probability directly proportional to their degrees.

The infinite limits of such processes have received less analysis, and this is our focus here. Our entry point is the paper of Kleinberg and Kleinberg \cite{KK}, in which a process is studied whereby a single vertex and a constant number $C$ of edges are added at each time-step, with each new edge starting at the new vertex and with endpoint independently chosen among the pre-existing vertices with probability proportional to their degree. Thus, these structures are analysed as \emph{directed multi-graphs}, in that each edge has a direction, and there may exist two or more edges sharing the same start and end-points. (Loops from a vertex to itself are not permitted, however.)

Kleinberg and Kleinberg show that in each of the cases $C=1$ and $C=2$, there is, up to isomorphism, a unique infinite limiting structure, which the process approaches with probability $1$. (They also show that the analogous result fails for $C \geq 3$.) 

We proceed in a similar spirit, pinning down the limiting structure up to isomorphism. However, in our model, the number of edges added at each stage is not prescribed, but is itself a random variable. Specifically, our model connects the new vertex $t+1$ to each previous vertex $u \leq t$ with probability $\frac{d_u(t)}{t}$, where $d_u(t)$ is the degree of $u$ at time $t$. (As mentioned above, in the current model, edges are directionless, and parallel edges are not permitted.)

Thus, our graphs are denser than those of Kleinberg and Kleinberg; where the expected number of edges in their graphs is linear in the number of vertices, in our case it is quadratic (this is made explicit in Lemma \ref{lem:anothermart} below).

In Theorem \ref{thm:main} below, we establish that, so long as the initial graph is neither complete nor edgeless, our process will with probability 1 approach the Rado graph, or a modification of it in which a finite number of universal or isolated vertices are incorporated. In Section \ref{section:close}, we outline an adaptation of the foregoing machinery to a variation of the model with probability of attachment given by $\lambda \cdot \frac{d_u(t)}{t}$ for some constant $\lambda \in (0,1]$. We make the case (omitting the delicate conditional probability considerations) that if $\lambda <1$, with probability 1 the limiting structure will be non-isomorphic to an augmented Rado graph. 

In \cite{E1} (as yet unpublished work developed simultaneously with the current paper) the author analyses a family of PA processes in which parallel edges are permitted, and in which the number of edges added at each stage is prescribed. Specifically it is shown that if $f(t)$ edges are added at time $t$, where $f$ is asymptotically bounded above and below by linear functions in $t$, then with probability $1$ the process will approach the natural multigraph analogue of the Rado graph. (We refer the reader to Definition 2.1 of \cite{E1} for the formal definition.)

\section{The Process}

\begin{defin} \label{defin:ourprocess}
Let $G'$ be any finite graph containing at least two vertices. We take its vertex set to be $\{0,1,\ldots, v'\}$, and let $G(t):=G'$ for all $t \leq v'$. For $t \geq v'$, we create a new graph $G(t+1)$ by introducing a new vertex $t+1$ which is connected to each previous vertex $u \leq t$ with probability $\ds p_u(t+1)=\frac{d_u(t)}{t}$ where $d_u(t)$ is the degree of vertex $u$ in $G(t)$.
\end{defin}

Notice that in the current model, as in \cite{DM}, but in contrast to \cite{E1} and \cite{OS}, the number of edges added at each stage is itself a random variable.

\begin{rem} \label{rem:nonstan}
$G(t)$ contains $t+1$ vertices, and thus $d_u(t)$ may take any value between $0$ and $t$. Now if $d_u(t')=0$ for any $t'$ then automatically $d_u(t)=0$ for all $t \geq u$. Likewise if $d_u(t')=t'$ then $d_u(t)=t$ for all $t \geq u$. Thus isolated vertices remain isolated, and universal vertices (i.e vertices connected to every other) remain universal. Of course, no graph can contain both.

We shall call isolated and universal vertices \textbf{non-standard} and all others \textbf{standard}.

\end{rem}

Our interest is the infinite limit of this process $G(\infty)$, and we shall prove:

\begin{thm} \label{thm:main}
\begin{enumerate}
\item If $G'$ is complete, then $G(\infty)$ is complete.

\item If $G'$ is edgeless, then so too is $G(\infty)$.

\item For any other $G'$, with probability $1$, the infinite limit $G(\infty)$ is isomorphic to one of the following:
\begin{itemize}
\item The Rado graph, augmented with a finite number of isolated vertices.
\item The Rado graph, augmented with a finite number of universal vertices.
\end{itemize}
\end{enumerate}
\end{thm}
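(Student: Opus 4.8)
The plan is to dispose of parts (1) and (2) immediately via Remark \ref{rem:nonstan}: if $G'$ is complete every vertex is universal, so $p_u(t+1)=d_u(t)/t=t/t=1$ and the new vertex attaches to all, staying universal; dually an edgeless $G'$ has every $p_u\equiv 0$. Induction on $t$ gives both. The content is part (3), whose engine (the martingale machinery of \cite{KK}) is that for each fixed $u$ the normalized degree $M_u(t):=d_u(t)/t$ is a martingale: since $\E[d_u(t+1)\mid G(t)]=d_u(t)+d_u(t)/t=d_u(t)(t+1)/t$, one gets $\E[M_u(t+1)\mid G(t)]=M_u(t)$. Being bounded in $[0,1]$, $M_u(t)$ converges almost surely to a limit $\xi_u\in[0,1]$. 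To identify its law I would read the dynamics as a \emph{P\'olya urn}: regarding each other vertex as a ball that is black if adjacent to $u$ and white otherwise, vertex $t+1$ adds one ball, black with probability the current black fraction $d_u(t)/t$. Starting the urn at the first time $t_1$ at which $u$ has both a neighbour and a non-neighbour, the classical limit theorem gives $\xi_u\sim\mathrm{Beta}(d_u(t_1),\,t_1-d_u(t_1))$ conditionally on $G(t_1)$, with both parameters at least $1$. For a \textbf{standard} vertex such a $t_1$ exists, so $\xi_u\in(0,1)$ almost surely; moreover any vertex ever of positive degree has $\xi_u>0$, and any vertex with a permanent non-neighbour has $\xi_u<1$.

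Next I would extract the Rado extension property. First, infinitely many standard vertices are born almost surely: since $G'$ has an edge $\{a,b\}$ (so $\xi_a>0$) and a non-edge $\{x,y\}$ (so $\xi_x<1$), the conditional probability of a standard birth, $1-\prod_{u\le t}(1-d_u(t)/t)-\prod_{u\le t}(d_u(t)/t)$, is eventually bounded below by a positive constant, whence the conditional Borel--Cantelli lemma applies. Now fix disjoint finite sets $U,V$ of standard vertices and enlarge $V$ by a fixed standard $c$ and $U$ by a fixed standard $d$, forcing any witness to have a neighbour and a non-neighbour, i.e.\ to be standard. The conditional probability that $t+1$ attaches to all of $V\cup\{c\}$ and none of $U\cup\{d\}$ equals $\prod_v (d_v(t)/t)\prod_w(1-d_w(t)/t)$, converging almost surely to $\prod_v\xi_v\prod_w(1-\xi_w)>0$; so these sum to $\infty$ and the conditional Borel--Cantelli lemma yields infinitely many witnesses. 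Intersecting over the countably many pairs $(U,V)$ shows, with probability $1$, that the subgraph induced on standard vertices satisfies the extension property; being countably infinite it is, by countable categoricity, a copy of the Rado graph.

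It remains to bound the \emph{non-standard} vertices, and here lies the main obstacle. Since a vertex born isolated stays isolated, it suffices to control births: the probability that $t+1$ is born isolated is $\prod_{u\le t}(1-d_u(t)/t)\le\exp(-2|E(t)|/t)$, so these births are summable (hence finite, by the conditional Borel--Cantelli lemma) as soon as $|E(t)|$ grows superlinearly. I would obtain this from the edge martingale $L(t):=|E(t)|/\prod_s(1+2/s)$: from $\E[|E(t+1)|\mid G(t)]=|E(t)|(1+2/t)$ one gets $L(t)\to Z\ge 0$ with $\E Z=|E(G')|>0$ and $\prod_s(1+2/s)\asymp t^2$. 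The delicate point I expect to be hardest is upgrading $\E Z>0$ to $Z>0$ \emph{almost surely}, since the rich-get-richer feedback admits a degenerate sparse fixed point that must be excluded on a null set. My intended route is a monotone coupling showing $\Prob(Z>0\mid G(t))\ge\epsilon$ uniformly in $t$ (extra edges only aid densification, and a single persistent edge already forces a positive chance of a positive limit), followed by L\'evy's $0$--$1$ law: $\mathbf{1}_{\{Z=0\}}=\lim_t\Prob(Z=0\mid G(t))\le 1-\epsilon$, forcing $Z>0$.

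Granting densification, $|E(t)|\asymp t^2$ and the isolated births are finite. To handle universal vertices I would exploit that the model is \emph{self-complementary}: in the complement graph a new vertex attaches to $u$ with probability $1-d_u(t)/t=(t-d_u(t))/t$, which is again the prescribed rule applied to complement-degrees, and the universal vertices of $G$ are precisely the isolated vertices of its complement. As $G'$ is not complete, the complement of $G'$ is not edgeless, so the identical argument bounds the universal births. Finally, isolated and universal vertices cannot coexist, so almost surely $G(\infty)$ is a copy of the Rado graph together with finitely many non-standard vertices, all of a single type, which is exactly the dichotomy claimed in \ref{thm:main}.
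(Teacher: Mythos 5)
Your overall architecture tracks the paper's quite closely: parts (1) and (2) from the persistence of universal/isolated vertices; the extension property from the per-vertex martingales $d_u(t)/t$ having almost-sure limits in $(0,1)$ for standard $u$; finiteness of the non-standard vertices from the edge-count martingale (your $L(t)$ is the paper's $Y(t)=\EE(t)/(t(t+1))$ up to a constant, since $\prod_j(1+2/j)$ telescopes to $t(t+1)$ times a constant); and the self-complementarity (the paper's black/white edge symmetry) to convert the isolated-vertex bound into the universal-vertex bound. Your appeal to the P\'olya urn and the Beta limit to get $\xi_u\in(0,1)$ almost surely is a legitimate shortcut that the paper explicitly flags as available before opting for a self-contained Kolmogorov--Doob argument; your bound $\prod_u(1-d_u(t)/t)\le\exp(-2\EE(t)/t)$ on the probability of an isolated birth is a clean substitute for the paper's Hoeffding step; and padding $U$ and $V$ to force witnesses to be standard is fine (the paper instead uses that only finitely many witnesses can be non-standard).

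The genuine gap is exactly where you predicted it: upgrading $\E Z>0$ to $Z>0$ almost surely. The monotone coupling correctly reduces to the worst case of a single edge present at time $t$, but the assertion that a single edge forces a chance of a positive limit that is bounded below \emph{uniformly in $t$} is the entire difficulty, and nothing in your sketch supplies it: a single edge at time $t$ gives $Y(t)=1/(t(t+1))\to 0$, and a single standard vertex with $\xi_u>0$ only forces \emph{linear} growth of $\EE(t)$, whereas you need quadratic growth; linear edge growth alone is consistent with $Y(t)\to 0$. The uniform bound is in fact true, but every proof of it I can see routes through a second-moment estimate on the increments of $Y$, namely $\E\big(Y(t+1)^2-Y(t)^2\,\big|\big|\,Y(t)\big)<\tfrac{2}{t^3}Y(t)$, which yields $\E\big(y^2\,\big|\big|\,\mathcal{F}_t\big)\le Y(t)^2+O\!\left(Y(t)/t^2\right)$ and hence a conditional Paley--Zygmund bound $\Prob\big(y>\tfrac12 Y(t)\,\big|\big|\,\mathcal{F}_t\big)\ge\tfrac14\big(1+C/\EE(t)\big)^{-1}$, uniformly positive because $\EE(t)\ge 1$; your L\'evy $0$--$1$ law step then closes the argument. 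That would be a workable alternative to the paper's route, which feeds the same variance estimate, together with the eventual bound $t\,Y(t)>\alpha$ obtained from one standard vertex's linear degree growth, into a Kolmogorov--Doob ``halving'' argument. As written, however, your proposal omits the variance estimate entirely at this step, and without it the claimed uniform $\epsilon$ is unsupported.
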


The first two clauses of Theorem \ref{thm:main} are immediate from Remark \ref{rem:nonstan} above, so we concentrate on the third, and make the standing assumption that $G'$ is neither complete nor edgeless (from which it immediately follows that no stage is complete or edgeless).

Since, by Remark \ref{rem:nonstan}, non-standard vertices can be recognised as soon as they appear and are of little interest, we might amend the process by immediately discarding each non-standard vertex when it appears. Theorem \ref{thm:main} then guarantees that we will obtain the Rado graph as the infinite limit of this modified process.

It will be convenient to amend the model as follows: we colour each edge as described in Definition \ref{defin:ourprocess} black, and introduce the new rule that every pair of distinct vertices in $G(t)$ is connected by a white edge if and only if it is not connected by a black edge. (Thus the white graph is the complement of the black graph. We imagine that on white paper, these edges will become invisible.) Now let $d^w_u(t)$ be the white degree of the vertex $u$ at time $t$, and $d^b_u(t)$ be its black degree. This introduces a useful symmetry to the process:

\begin{rem} \label{rem:sym}
At any stage $(t)$, for any vertex $u \leq t$, it holds that $d^w_u(t) + d^b_u(t) = t$. Thus, in $G(t+1)$, the probability that vertex $u$ is connected to vertex $t+1$ with a white edge is precisely $1 - \frac{d^b_u(t)}{t} = \frac{d^w_u(t)}{t}$.\\ 
\end{rem}

\section{The Standard Vertices}

\begin{rem}
For a fixed vertex $u$, we can consider the edges of each colour emanating from $u$ as black or white balls within a P\'olya urn. Our process then is identical to selecting, at each stage, a ball from the urn uniformly at random, and then returning it along with a second ball of the same colour. This observation would allow us to appeal to known facts about the limiting proportions of black balls in the urn as given by a beta distribution $\textrm{Beta}(d_u^{b}(t_0),d_u^{w}(t_0))$. See, for instance, \cite{Freed}. From this we could easily derive the main result of this section: Proposition \ref{prop:poslimit}.

Nevertheless, we opt to provide a self-contained elementary argument, since we shall not need the details of the beta distribution, and since also our argument's main technical ingredient, Proposition \ref{prop:generalmart}, will in any case be required for separate purposes in Section \ref{section:nonstandard}.
\end{rem}

\begin{lem} \label{lem:nonzero}
For any standard vertex $u$ in any stage $G(t_0)$, the probability that $u$ never receives another black edge is $0$. The same goes for white edges.
\end{lem}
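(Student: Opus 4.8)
The plan is to exploit the P\'olya urn description of the edges at $u$ recorded in the preceding remark: at each stage exactly one of $d^b_u$, $d^w_u$ increases by one, since $d^b_u(t) + d^w_u(t) = t$. Because $u$ is standard at time $t_0$, its black degree $b := d^b_u(t_0)$ satisfies $1 \le b \le t_0 - 1$; in particular $b \ge 1$, which will be the crux of the argument.

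First I would set up the relevant event explicitly. Let $A_T$ denote the event that $u$ receives no black edge from any of the incoming vertices $t_0+1, \dots, T+1$. On the event $A_{T-1}$ the black degree of $u$ has not changed, so $d^b_u(T) = b$ and $d^w_u(T) = T - b$; hence the conditional probability that $u$ again fails to receive a black edge at stage $T+1$ is $\frac{d^w_u(T)}{T} = \frac{T-b}{T}$. Multiplying these conditional probabilities (equivalently, arguing by induction on $T$) gives $\Prob(A_T) = \prod_{t=t_0}^{T} \frac{t-b}{t}$.

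Next, the event that $u$ never receives another black edge is exactly the decreasing intersection $\bigcap_T A_T$, so by continuity of measure from above its probability equals $\lim_{T \to \infty} \Prob(A_T) = \prod_{t=t_0}^{\infty}\bigl(1 - \tfrac{b}{t}\bigr)$. Since $b \ge 1$ the series $\sum_t \frac{b}{t}$ diverges, and the elementary bound $1 - x \le e^{-x}$ forces the partial products to zero; hence the probability is $0$. The statement for white edges then follows at once from the black--white symmetry of Remark \ref{rem:sym}, interchanging the roles of $d^b_u$ and $d^w_u$ and using that $u$ being standard also gives $d^w_u(t_0) \ge 1$.

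I expect the only point requiring any care to be the passage from the finite products to the infinite one, namely justifying that the conditioning is legitimate even though we are conditioning on events whose probability shrinks to zero. Phrasing everything through the nested events $A_T$ and invoking continuity from above sidesteps this cleanly, so no real obstacle remains; the divergence of the harmonic-type series then does the rest.
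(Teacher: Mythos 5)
Your proof is correct and follows essentially the same route as the paper: both express the probability of never receiving another black edge as the infinite product $\prod_{t \ge t_0}\bigl(1 - \tfrac{D}{t}\bigr)$ and show it vanishes via the divergence of the harmonic series (you via $1 - x \le e^{-x}$, the paper via $\ln(1+x) > \tfrac{1}{2}x$). Your explicit decomposition into the nested events $A_T$ and appeal to continuity from above is a slightly more careful justification of the product formula, but it is not a different argument.
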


\begin{proof}

Suppose at time $t_0$ that $u$ has black degree $d(t_0)=D>0$. The probability that $u$ never receives another black edge is therefore 
$$\prod_{t=t_0}^{\infty} \left(1 - \frac{D}{t} \right).$$

We shall show that this is $0$. Taking logarithms, it is therefore enough to show that
$$\sum_{t=t_0}^{\infty} \ln \left(1 + \frac{D}{t-D}  \right)$$ diverges to $\infty$.
This follows from the divergence of the harmonic series, since for all small enough $x$, we know $\ln(1+x)> \frac{1}{2}x$.

An identical argument applies, replacing `black' with `white'.
\end{proof}

\begin{coro} \label{coro:dinf}
Given any state of the graph $G(t_0)$ containing any standard vertex $u$, with probability $1$ it will be true that $d_u^b(t) \to \infty$ and $d_u^w(t) \to \infty$ as $t \to \infty$.
\end{coro}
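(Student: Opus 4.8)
The plan is to prove the two conclusions separately and then intersect. I will focus on $d^b_u(t) \to \infty$, the argument for white edges being identical by the symmetry recorded in Remark \ref{rem:sym}. First I would note that, since $u$ is standard at stage $t_0$, it has $d^b_u(t_0) > 0$ and $d^w_u(t_0) > 0$; as both quantities are non-decreasing in $t$, the vertex $u$ remains standard at \emph{every} subsequent stage (its black degree never drops to $0$, nor its white degree, so it can become neither isolated nor universal). In particular $d^b_u(t)$ is a non-decreasing, integer-valued sequence, so it fails to tend to infinity precisely when it is eventually constant.

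Accordingly, for each $s \geq t_0$ let $A_s$ be the event that $d^b_u(t) = d^b_u(s)$ for all $t \geq s$, i.e.\ that $u$ receives no black edge after stage $s$. Then $\{d^b_u(t) \not\to \infty\} = \bigcup_{s \geq t_0} A_s$, a countable union. The key step is to show that $\Prob(A_s) = 0$ for every fixed $s$. Conditioning on the full state $G(s)$, the vertex $u$ is (deterministically) standard at stage $s$ by the previous paragraph, so Lemma \ref{lem:nonzero}, applied at stage $s$ in place of $t_0$, tells us that the conditional probability of $A_s$ given $G(s)$ is $0$ whatever the precise configuration; averaging over $G(s)$ then gives $\Prob(A_s) = 0$.

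Countable subadditivity now yields $\Prob(d^b_u(t) \not\to \infty) \leq \sum_{s \geq t_0} \Prob(A_s) = 0$, so $d^b_u(t) \to \infty$ with probability $1$. The symmetric argument gives $d^w_u(t) \to \infty$ with probability $1$, and the intersection of these two probability-$1$ events again has probability $1$, which is exactly the assertion. I expect the main obstacle to be the conditional application of Lemma \ref{lem:nonzero} at the a priori random stage $s$: the lemma as stated concerns a fixed starting configuration, so to deploy it here I must pass to the conditional probability given $G(s)$ and exploit the fact that $u$ is standard in every configuration that can arise. Once this conditional reading is secured, the remaining measure-theoretic bookkeeping is routine.
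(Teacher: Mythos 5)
Your proposal is correct and is exactly the argument the paper intends: the paper's proof is the one-line remark that the corollary "follows from Lemma \ref{lem:nonzero} by the countable additivity of the probability measure," and your decomposition into the events $A_s$, the conditional application of Lemma \ref{lem:nonzero} at each stage $s$ (using that standard vertices remain standard), and the final subadditivity step simply make that one line explicit. No substantive difference in approach.
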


\begin{proof}
This follows automatically from Lemma \ref{lem:nonzero} by the countable additivity of the probability measure.
\end{proof}

\begin{defin}
Given a vertex $u$ and a colour $c \in \{b,w\}$, define the following:
\begin{align*}
U^c_u(t+1)&:=d^c_u(t+1) - d^c_u(t)\\
X^c_u(t)&:=\frac{d^c_u(t)}{t}
\end{align*}
\end{defin}

We shall suppress the subscript $u$ and superscript $c$ in the above when they are clear from context.

\begin{lem} \label{lem:xuexists}
For any vertex $u$ and colour $c \in \{b,w\}$
\begin{enumerate}[(i)]
\item For all $t$, $X_u^c(t) \in [0,1]$. If $u$ is standard then $X_u^c(t) \in (0,1)$. 
\item $\left( X_u^c(t) \right)_{t \geq u}$ is a martingale.
\item With probability 1, there exists $x^c_u \in [0,1]$ such that $\frac{d^c_u(t)}{t} \to x^c_u$.
\item Furthermore, $x^b_u + x^w_u =1$.
\end{enumerate}
\end{lem}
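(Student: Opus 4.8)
The plan is to treat the four clauses in order; clauses (i) and (iv) are combinatorial bookkeeping, clause (ii) is the single genuine computation, and clause (iii) rests on (i) and (ii) via a standard convergence theorem.

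For (i) I would argue directly from the vertex count. Since $G(t)$ has $t+1$ vertices, $u$ has at most $t$ neighbours of either colour, so $d_u^c(t) \in \{0,1,\ldots,t\}$ and hence $X_u^c(t) \in [0,1]$. If $u$ is standard, then by Remark \ref{rem:nonstan} it is neither isolated nor universal, so $d_u^b(t) \neq 0$ and (via Remark \ref{rem:sym}, since $d_u^b(t)+d_u^w(t)=t$) also $d_u^b(t) \neq t$; the same holds for $d_u^w(t)$, giving $X_u^c(t)\in(0,1)$. Because black and white degrees are non-decreasing in $t$, a vertex that is standard when it appears remains standard, so this holds for all $t$.

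The core is (ii). Writing $U_u^c(t+1)=d_u^c(t+1)-d_u^c(t)$ for the indicator that the incoming vertex $t+1$ attaches to $u$ by an edge of colour $c$, the attachment rule of Definition \ref{defin:ourprocess} (black) together with Remark \ref{rem:sym} (white) gives $\E[U_u^c(t+1)\mid \mathcal{F}_t]=\frac{d_u^c(t)}{t}=X_u^c(t)$, where $(\mathcal{F}_t)$ is the natural filtration. Hence $\E[d_u^c(t+1)\mid\mathcal{F}_t]=d_u^c(t)\cdot\frac{t+1}{t}$, and dividing by $t+1$ yields $\E[X_u^c(t+1)\mid\mathcal{F}_t]=\frac{d_u^c(t)}{t}=X_u^c(t)$, which is exactly the martingale identity. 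I would note that the dynamics, and hence this identity, operate for $t\ge v'$; over the static range $t<v'$ there is nothing to track, so the martingale is understood to run from $t=\max(u,v')$.

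Clauses (iii) and (iv) then follow at once. By (i) the martingale $(X_u^c(t))$ is bounded in $[0,1]$, so the Martingale Convergence Theorem gives almost-sure convergence to some $x_u^c\in[0,1]$, which is the sense I attach to $d_u^c(t)\sim x_u^c t$. For (iv), Remark \ref{rem:sym} gives $X_u^b(t)+X_u^w(t)=\frac{d_u^b(t)+d_u^w(t)}{t}=1$ identically, so passing to the limit gives $x_u^b+x_u^w=1$. I do not anticipate a serious obstacle: the only real content is the conditional-expectation identity in (ii) and the boundedness feeding the convergence theorem. The one point worth flagging is that (iii) asserts merely that the limit exists in $[0,1]$; it does not claim $x_u^c>0$, and establishing positivity of the limiting proportions is a separate matter presumably handled by Proposition \ref{prop:poslimit}.
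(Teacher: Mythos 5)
Your proposal is correct and follows essentially the same route as the paper: the martingale identity via the one-step conditional expectation of the degree, Doob's convergence theorem for (iii), and the black/white degree sum from Remark \ref{rem:sym} for (iv). Your treatment of (i) and of the filtration is slightly more explicit than the paper's, but the substance is identical.
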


\begin{proof}
The first part is immediate from the definitions. 
Writing $X$ for $X_u^c$, we show that $X(t)$ is a martingale:
\begin{align*}
\E \left( X(t+1) || X(t) = \frac{d}{t} \right) &= \frac{1}{t+1} \E (d(t+1))||d(t) = d)\\
&= \frac{1}{t+1}\left(d+ \frac{d}{t} \right)  =  \frac{d}{t} = X(t).\end{align*}

Part (iii) follows from parts (i) and (ii) by Doob's convergence theorem. Part (iv) follows too, since $X^b(t)+X^w(t) = 1$ by Remark \ref{rem:sym}. \end{proof}

\begin{rem} \label{rem:L2}
By Lemma \ref{lem:xuexists} (i), the martingale  $\left( X^c_u(t) \right)_{t \geq u}$ is bounded in $\mathcal{L}_2$.
\end{rem}

Our next goal is to show (in Proposition \ref{prop:poslimit} below) that with probability $1$ we have $0<x^c_u<1$. Towards this, we recall some Martingale machinery from \cite{KK}, which we express more generally for subsequent reuse. First recall the Kolmogorov-Doob inequality: 

\begin{thm}[Kolmogorov-Doob Inequality] \label{thm:KD}
Suppose that $\big( \tilde{Z}(n) \big)_{\! n \in \Nat }$ is a non-negative submartingale and $\alpha>0$. Then for any $N  \in \Nat$
$$\Prob \left( \max_{n \leq N} \tilde{Z}(n) \geq \alpha \right) \leq \frac{1}{\alpha} \E \left( \tilde{Z}(N) \right).$$
\end{thm}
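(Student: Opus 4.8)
The plan is to prove the maximal inequality by the standard first-passage decomposition, exploiting the submartingale property together with the fact that the events in the decomposition are determined by the past. Write $A := \{\max_{n \leq N} \tilde{Z}(n) \geq \alpha\}$ for the event we wish to bound. First I would introduce the first-passage time $\tau := \min\{n \leq N : \tilde{Z}(n) \geq \alpha\}$, so that $A = \{\tau \leq N\}$, and partition $A$ into the disjoint events $A_k := \{\tau = k\}$ for $0 \leq k \leq N$. By construction $A_k = \{\tilde{Z}(0) < \alpha, \ldots, \tilde{Z}(k-1) < \alpha, \, \tilde{Z}(k) \geq \alpha\}$ depends only on $\tilde{Z}(0), \ldots, \tilde{Z}(k)$, hence is measurable with respect to the natural filtration $\mathcal{F}_k$ up to time $k$.

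Next, on $A_k$ we have $\tilde{Z}(k) \geq \alpha$ by the definition of $\tau$, so $\alpha \Prob(A_k) \leq \E(\tilde{Z}(k) \mathbf{1}_{A_k})$. The key step is then to pass from time $k$ to time $N$: since $\tilde{Z}$ is a submartingale and $A_k \in \mathcal{F}_k$, the defining inequality $\E(\tilde{Z}(N) \mid \mathcal{F}_k) \geq \tilde{Z}(k)$ together with the tower property yields $\E(\tilde{Z}(k) \mathbf{1}_{A_k}) \leq \E(\tilde{Z}(N) \mathbf{1}_{A_k})$.

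Summing over the disjoint pieces $A_0, \ldots, A_N$, whose union is $A$, gives $\alpha \Prob(A) \leq \E(\tilde{Z}(N) \mathbf{1}_A)$. Finally, since in every application here $\tilde{Z}$ is nonnegative (indeed our martingales take values in $[0,1]$), we may drop the indicator, $\E(\tilde{Z}(N) \mathbf{1}_A) \leq \E(\tilde{Z}(N))$, and divide through by $\alpha$ to obtain the claimed bound. For a general, possibly signed, submartingale the same argument delivers the sharper quantity $\E(\tilde{Z}(N)^+)$ on the right, which reduces to $\E(\tilde{Z}(N))$ precisely in the nonnegative case at hand.

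I do not expect any serious obstacle, as this is a classical result; the one point requiring care is the bookkeeping that makes the decomposition $\{A_k\}$ genuinely disjoint and each $A_k$ truly $\mathcal{F}_k$-measurable, since it is exactly this that licenses the replacement of $\tilde{Z}(k)$ by $\tilde{Z}(N)$ under the expectation via the submartingale property.
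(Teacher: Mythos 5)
The paper offers no proof of this statement: it is quoted as a classical result with a pointer to \cite{KK}, so there is nothing internal to compare your argument against. Your proof is the standard first-passage decomposition and is correct: the events $A_k=\{\tau=k\}$ are disjoint and $\mathcal{F}_k$-measurable, and the submartingale property via the tower rule gives $\E\big(\tilde{Z}(k)\mathbf{1}_{A_k}\big)\leq\E\big(\tilde{Z}(N)\mathbf{1}_{A_k}\big)$, which is exactly the step that needs the measurability you flag. Your closing caveat is also the right one: with $\E\big(\tilde{Z}(N)\big)$ rather than $\E\big(\tilde{Z}(N)^{+}\big)$ on the right-hand side, the stated inequality requires the submartingale to be nonnegative (or one must retain the indicator); in the paper's only application the submartingale is $\tilde{Z}_m(n)=\left(Z(n)-Z(m)\right)^2\geq 0$, so the version you prove is exactly what is used.
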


\begin{prop} \label{prop:generalmart}
Suppose that $\big( Z(t) \big)_{\! t  \in \Nat}$ is a Martingale with limit $z$, such that with probability $1$ there exist $\alpha, A >0$ and $t_1 \geq 3$ (each of whose values may be given by random variables) so that:
\begin{enumerate}[(i)]
\item $Z(t) \in (0,1)$ for all $t \geq t_1$
\item $t \cdot Z(t) > \alpha$ for all $t \geq t_1$.
\item For all $t>m \geq t_1$ 
$$\E \Big( Z(t+1)^2 - Z(t)^2 \Big| \Big| Z(m) \Big) < \frac{A}{t^2} \cdot Z(m).$$
\item $\ds \beta := \frac{8A}{\alpha} <1$.
\end{enumerate}

\noindent Then $\Prob(z>0)=1$.
\end{prop}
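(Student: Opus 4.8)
The plan is to control, at every scale, the probability that the martingale $Z$ drops by a factor of two, and thereby bound how small its limit $z$ can be. Throughout I work on the probability-$1$ event on which the constants $\alpha, A, t_1$ of the hypothesis exist, treating them as fixed; a countable intersection over $i$ at the end delivers the unconditional conclusion. Write $c_0 := \frac{\alpha}{t_1}$ and note that condition (ii) at $t=t_1$ already gives $Z(t_1) > c_0$. The engine is the observation that, for a martingale, condition (iii) is a bound on conditional variance: since $\E\big((Z(t+1)-Z(t))^2 \,\big|\, Z(m)\big) = \E\big(Z(t+1)^2 - Z(t)^2 \,\big|\, Z(m)\big)$, summing the bounds in (iii) over $t \geq m$ and telescoping gives
$$\E\big((z - Z(m))^2 \,\big|\, Z(m)\big) \;=\; \sum_{t \geq m} \E\big(Z(t+1)^2 - Z(t)^2 \,\big|\, Z(m)\big) \;<\; \frac{2A}{m}\, Z(m),$$
where I use $\sum_{t \geq m} t^{-2} < \tfrac{2}{m}$, valid since $m \geq t_1 > 3$.

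Next I would establish the key one-step estimate: for any $m \geq t_1$, conditioned on $Z(m) = y$,
$$\Prob\Big( \inf_{t \geq m} Z(t) < \tfrac{y}{2} \,\Big|\, Z(m) = y \Big) \;<\; \beta.$$
This follows by applying the Kolmogorov--Doob inequality (Theorem \ref{thm:KD}) to the submartingale $\big( (Z(t) - y)^2 \big)_{t \geq m}$ with threshold $(y/2)^2$: dropping below $y/2$ forces a deviation of at least $y/2$, so the probability is at most $\frac{\E((z-y)^2 \mid Z(m)=y)}{(y/2)^2} < \frac{2Ay/m}{y^2/4} = \frac{8A}{my}$, and condition (ii) gives $my > \alpha$, whence the bound $\frac{8A}{\alpha} = \beta$, which is $< 1$ by (iv).

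I would then iterate this estimate along first-passage times. Set $S_0 = t_1$ and recursively let $S_k$ be the first time after $S_{k-1}$ at which $Z$ falls below $\tfrac12 Z(S_{k-1})$. Applying the one-step estimate at the stopping time $S_{k-1}$ — legitimate because the conditioning in (iii) is on $Z(m)$ alone, so the process is Markov and the strong Markov property applies — gives $\Prob(S_k < \infty \mid \mathcal{F}_{S_{k-1}}) < \beta$ on $\{S_{k-1} < \infty\}$. Multiplying these conditional bounds yields $\Prob(S_i < \infty) < \beta^i$. It then remains to relate the passage count to the size of the limit: since $Z(t_1) > c_0$ and each completed passage at most halves the running value, a limit with $z < c_0/2^i$ should require at least $i$ passages, giving $\{z < c_0/2^i\} \subseteq \{S_i < \infty\}$ and hence the stated bound $\Prob\big(z < \tfrac{1}{2^i}\cdot\tfrac{\alpha}{t_1}\big) < \beta^i$. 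Letting $i \to \infty$, so that $\beta^i \to 0$, then forces $\Prob(z > 0) = 1$.

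The step I expect to be the main obstacle is precisely this last inclusion $\{z < c_0/2^i\} \subseteq \{S_i < \infty\}$, i.e. certifying that $i$ genuine halvings are needed to depress the limit below $c_0/2^i$. The danger is a single large downward jump of $Z$ that overshoots several thresholds at once, so that $z$ is small although fewer than $i$ passages have occurred; a gradual drift causes no trouble, but a big jump undercounts the passages. Ruling this out requires returning to the per-step form of (iii), which makes large jumps improbable (a one-step drop of size $s$ from level $y$ at time $t$ has probability at most $\frac{Ay}{t^2 s^2}$ by Markov's inequality applied to the conditional variance). Making this overshoot bookkeeping precise — together with the careful handling of the random constants $\alpha, A, t_1$ and the strong Markov application at the $S_k$ — is where the real work lies; the variance summation and the one-step Kolmogorov--Doob estimate are routine by comparison.
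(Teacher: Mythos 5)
Your proposal is, essentially step for step, the paper's own proof: the telescoped conditional-variance bound $\E\big((Z(n)-Z(m))^2 \,\big|\big|\, Z(m)\big) < \frac{2A}{m}\,Z(m)$, the Kolmogorov--Doob estimate giving a per-passage halving probability at most $\frac{8A}{m\,Z(m)} < \frac{8A}{\alpha}=\beta$, and the iteration along the first-passage times $n_0=t_1$, $n_{k+1}=\min\{n \ge n_k : Z(n)<\tfrac{1}{2}Z(n_k)\}$. The step you single out as the main obstacle --- the inclusion $\{z<c_0/2^i\}\subseteq\{S_i<\infty\}$ --- is precisely the step the paper dispatches with the words ``It follows that'' and no further justification, so you have not only reconstructed the intended argument but correctly located its one soft spot. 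Your overshoot worry is legitimate at the stated level of generality: hypotheses (i)--(iv) control only conditional variances, so nothing forbids a rare single step that crosses several halving thresholds at once, leaving $Z(n_k)$ far below $2^{-k}Z(t_1)$ and breaking the contrapositive ``$n_{i+1}=\infty$ implies $z\ge 2^{-i}Z(t_1)$''. In the paper's two applications (Propositions \ref{prop:poslimit} and \ref{prop:anothermartpos}) the issue is harmless, because both martingales are a non-decreasing count divided by a deterministic function of $t$: $X_u(t)$ and $Y(t)$ can fall by at most a factor $\frac{t}{t+1}$ (resp.\ roughly $\frac{t}{t+2}$) in a single step, so each halving forces the passage times to grow geometrically and the accumulated overshoot is a bounded multiplicative constant, absorbed by the slack already built into the stated constants. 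The clean repair for the general Proposition is therefore to add such a bounded-downward-step hypothesis (or to concede a constant in the conclusion), rather than the per-step Markov-inequality patch you sketch, which would cost an extra union bound at every level. The remaining informalities you note --- conditioning at the stopping times $S_{k-1}$ and the a.s.-defined constants $\alpha, A, t_1$ --- are treated at exactly the same level of rigour in the paper.
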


\begin{proof}

The argument is essentially contained in \cite{KK}, but we include it here for completeness. Given any $n>m \geq t_1$ define $$\tilde{Z}_{m}(n) := \left( Z(n) - Z(m) \right)^2.$$ 
Thus by our third hypothesis, for all $n>m$,
\begin{align*}
\E \left( \tilde{Z}_{m}(n) \big|\big| Z(m) \right) & = \sum_{t=m}^{n-1} \E \Big( Z(t+1)^2 \big|\big| Z(m) \Big) - \E \Big( Z(t)^2 \big|\big| Z(m) \Big)\\
& < A \cdot Z(m) \cdot \sum_{t=m}^{\infty} \frac{1}{t^2} \ \ < \ \frac{2A}{m} \cdot Z(m)
\end{align*}
since $m \geq 3$.

Notice too that for fixed $m$, the sequence $\left( \tilde{Z}_{m}(n) \right)_{\! n}$ forms a submartingale.

Now beginning at any time $t_0$, we may define a sequence of times: $n_0 = t_0$. Let $n_{i+1}$ be the least $n \geq n_i$ (if any exists) such that $Z(n) < \frac{1}{2}Z \left(n_{i} \right)$. Otherwise $n_{i+1} = \infty$.

We next apply the Kolmogorov-Doob inequality (Theorem \ref{thm:KD}) to $\tilde{Z}_{n_i}(n)$:
\begin{eqnarray*}
\Prob ( n_{i+1} < \infty ||n_{i} < \infty) & = & \Prob \left(  \min_{n \geq n_i} Z(n) < \frac{1}{2} Z(n_i) {\Big|\Big|} Z(n_i)\right)\\
& \leq & \Prob \left(  \max_{n \geq n_i} \tilde{Z}_{n_i}(n) > \frac{1}{4} Z(n_i)^2 {\Big|\Big|} Z(n_i)\right) \\
& = & \lim_{N\to \infty} \Prob \left(  \max_{n: N \geq n \geq n_i} \tilde{Z}_{n_i}(n) > \frac{1}{4} Z(n_i)^2 {\Big|\Big|} Z(n_i)\right) \\
& \leq & \frac{4}{Z(n_i)^2} \cdot \lim_{N \to \infty} \E (\tilde{Z}_{n_i}(N) || Z(n_i)).\end{eqnarray*}

The above holds if we start the at any time $t_0$. However, with probability 1, the time $t_1$ exists as described. In the case $t_0 \geq t_1$, we find $$\Prob ( n_{i+1} < \infty ||n_{i} < \infty) < \frac{8A}{Z(n_i) \cdot n_i} \ \ < \beta.$$

Since $\beta<1$, it follows that the event that every $n_i$ is finite has probability zero, giving the result. \end{proof}

\begin{prop} \label{prop:poslimit}
Given any standard vertex $u$, we have $\Prob \left(x^c_u>0 \right) =  \Prob \left(x^c_u<1 \right) = 1$.
\end{prop}

\begin{proof}
As usual we write $X$ for $X^c_u$ and $x$ for $x^c_u$.

The result will follow by Proposition \ref{prop:generalmart} applied to $X(t)$, once we have verified its hypotheses. The first follows by the assumption that $u$ is standard. For second, note that by Corollary \ref{coro:dinf} with probability 1 we will see $d^c_u(t_0) \geq 16$ (say) for some $t_0$. Thus we may take $\alpha = 16$.

For the third, recall $U(t+1) := d(t+1) - d(t)$. So $U(t+1)$ is a Bernoulli variable with $$\E \Big( U(t+1) \Big| \Big| d(t) = d \Big) = \E \Big( U(t+1)^2 \Big| \Big| d(t) = d \Big) = \frac{d}{t}.$$

Since $d(t+1)^2 = d(t)^2+2d(t) U(t+1) + U(t+1)^2$,
\begin{align*}
\E \Big( d(t+1)^2 || d(t)=  d \Big) & = d^2+ 2d \cdot \frac{d}{t} + \frac{d}{t}\\
&< \frac{d}{t} + d^2\left(1 + \frac{1}{t} \right)^2\\
&= X + X^2\left(t+1 \right)^2.
\end{align*}
Now
\begin{eqnarray*}
\E \Big( X(t+1)^2 & \ \Big| \Big| & d(t)=d \Big) =\frac{1}{(t+1)^2} \cdot \E \Big( d(t+1)^2\ \Big| \Big | \ d(t)=d \Big)\\
& < & \frac{1}{(t+1)^2} X(t) + X(t)^2.
\end{eqnarray*}
Since $\E \Big( X(t) \Big| \Big| X(m) \Big) = X(m)$, by the law of total expectation,
\begin{equation}
\E \Big( X(t+1)^2 - X(t)^2 \Big| \Big| X(m) \Big) < \frac{1}{t^2} X(m). \label{equation:squares} \end{equation}
Taking $A=1$ gives the result.
\end{proof}

We may now prove the first part of Theorem \ref{thm:main}.

\begin{prop} \label{prop:proof1}
The subgraph of $G(\infty)$ comprising the standard vertices is isomorphic to the Rado graph.
\end{prop}

\begin{proof}
Let $U= \left\{u_i : i \leq n \right\}$ and  $V= \left\{v_j : j \leq m\right\}$ be disjoint finite sets of standard vertices, and let $t_0$ be any stage of the process. Our initial goal is to establish the existence of a \emph{witness} for $(U,V)$, i.e. a vertex adjacent to each $u_i$ and no $v_j$, which emerges at some time $t_1 > t_0$.

For each $u_i$ (respectively $v_j$) by Lemma \ref{lem:xuexists} and Proposition \ref{prop:poslimit} there exists $x_i$ (respectively $y_j$) in  $(0,1)$ representing the limiting proportion of neighbours of $u_i$ ($v_j$) among all vertices. Since a vertex $w> \max \left\{u_i,v_j : i \leq n, j \leq m \right\}$ is connected to each $u_i$ or $v_j$ independently of whether it is connected to the others, it follows with probability $1$ that the limiting proportion of witnesses for $(U,V)$ is given by $\prod_{i=1}^n x_i \cdot \prod_{j=1}^m \left( 1-y_j \right)>0$. Thus with probability $1$, there are infinitely many such witnesses, and thus at least one arising after stage $t_0$.

To complete the proof, we enumerate all possible pairs of disjoint finite sets $\left(U_i, V_i\right)_{i \in \Nat}$. Then let $t_0$ be the first stage at which $\left(U_0, V_0\right)$ receives a witness and $t_{i+1}$ be the first stage strictly after $t_i$ at which $\left(U_{i+1}, V_{i+1}\right)$ receives a witness.

Then by the argument above and the countable additivity of the probability measure, with probability 1 every $t_i$ is finite as required. \end{proof}

\section{The non-standard vertices} \label{section:nonstandard}
Recall our standing assumption that $G'$ (and thus every stage of the process) is neither complete nor edgeless. Thus we remain under the hypotheses of part 3 of Theorem \ref{thm:main}. Our goal in this section is Proposition \ref{prop:nonstand}, where we show that the non-standard vertices in $G(\infty)$ remain finite with probability 1. As in the previous section, we employ the machinery of Martingales.

\begin{lem} \label{lem:anothermart}
Write $\EE^c(t)$ for the number of edges of colour $c$ in $G(t)$. 

Define $Y^c(t):= \frac{\EE^c (t)}{t(t+1)}$. Then 
\begin{enumerate}[(i)]
\item $Y^c(t) \in (0,1)$
\item $\left(Y^c(t)\right)_{t}$ is a Martingale
\item $Y^c(t) \to y^c$ for some $y^c \in [0,1]$. 
\end{enumerate}
\end{lem}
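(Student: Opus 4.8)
The plan is to mirror the proof of Lemma \ref{lem:xuexists}: read off the bounds in (i) directly from the definitions, establish the martingale property in (ii) by a single conditional-expectation computation, and deduce (iii) from Doob's convergence theorem.

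For part (i), I would note that in $G(t)$ each of the $\binom{t+1}{2}=\frac{t(t+1)}{2}$ pairs of distinct vertices carries exactly one edge, so $\EE^b(t)+\EE^w(t)=\frac{t(t+1)}{2}$. In particular $\EE^c(t)\leq\frac{t(t+1)}{2}<t(t+1)$, giving $Y^c(t)<1$ (in fact $Y^c(t)\leq\frac12$). Since our standing assumption ensures that no stage is complete or edgeless, both colours occur at every stage, so $\EE^c(t)\geq 1$ and hence $Y^c(t)>0$.

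The content lies in part (ii). When vertex $t+1$ arrives, each existing vertex $u\leq t$ acquires a colour-$c$ edge to $t+1$ with probability $\frac{d^c_u(t)}{t}$, invoking Remark \ref{rem:sym} for the white case. Thus, conditioning on the state $G(t)$, the expected number of new colour-$c$ edges is $\frac{1}{t}\sum_{u\leq t}d^c_u(t)=\frac{2\EE^c(t)}{t}$, where the only combinatorial input is the handshake identity $\sum_u d^c_u(t)=2\EE^c(t)$. Hence
$$\E\big(\EE^c(t+1)\,||\,G(t)\big)=\EE^c(t)\Big(1+\frac{2}{t}\Big)=\EE^c(t)\cdot\frac{t+2}{t},$$
and dividing by $(t+1)(t+2)$ yields $\E\big(Y^c(t+1)\,||\,G(t)\big)=\frac{\EE^c(t)}{t(t+1)}=Y^c(t)$. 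The normalisation $t(t+1)$ is chosen exactly so that the factor $\frac{t+2}{t}$ cancels, converting the systematic growth of $\EE^c$ into an honest martingale.

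Part (iii) is then immediate: by part (i) the martingale $(Y^c(t))_t$ is bounded, so Doob's convergence theorem gives almost-sure convergence to some $y^c\in[0,1]$. I expect no genuine obstacle here; the only point demanding care is the bookkeeping in (ii), namely applying the handshake identity correctly and confirming that the normalisation delivers an exact martingale rather than merely a sub- or supermartingale.
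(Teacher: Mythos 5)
Your proposal is correct and follows essentially the same route as the paper: positivity and boundedness from the standing assumption and the count of vertex pairs, the handshake identity giving $\E(\EE^c(t+1)\,||\,G(t))=\frac{t+2}{t}\EE^c(t)$ so that the $t(t+1)$ normalisation yields an exact martingale, and Doob's convergence theorem for part (iii). (Your bound $\EE^c(t)\le\binom{t+1}{2}$ is in fact slightly more careful than the paper's stated $\EE(t)<\binom{t}{2}$.)
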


\begin{proof}
Omitting the superscript $c$, since $G(t)$ is not edgeless (in either colour) we know $Y(t)>0$. Also, $\ds \EE(t)< \left( \! \begin{array}{c} t \\ 2 \end{array} \! \right)$ implying that $Y(t)<1$. Now,  $$\E \left( d_{t+1}(t+1) \Big| \Big| G(t) \right)  = \sum_{u=0}^t \frac{d_u(t)}{t}= \frac{2 \EE(t)}{t}.$$ 
Also $\EE(t+1) = \EE(t)+d_{t+1}(t+1)$. It follows that 
$$\E \left( \EE(t+1)  \big| \big| G(t) \right) = \left( \frac{t+2}{t} \right) \cdot \EE(t) .$$
Thus $\E \Big( Y(t+1)) \big| \big| Y(t) \Big) = Y(t)$, and thus $Y$ is a Martingale. Part (iii) follows by Doob's Convergence Theorem.
\end{proof}

\begin{rem} \label{rem:L22}
The Martingale $Y(t)$ converges in $\mathcal{L}_2$.
\end{rem}

Our next aim is to show that with probability $1$ in fact $y >0$. This will again follow from Proposition \ref{prop:generalmart}, once we have shown that its hypotheses hold. The first is part (i) in Lemma  \ref{lem:anothermart}. We turn our attention to the second hypothesis:

\begin{prop} \label{prop:lotsofedges}
Given any state of the graph $G(t_0)$ and $c \in \{b,w\}$ with probability $1$ there exist $\alpha >0$ and $t_1 \geq t_0$ such that for all $t \geq t_1$  $$t \cdot Y^c(t) > \alpha.$$
\end{prop}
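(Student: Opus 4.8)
The key observation is that $t \cdot Y^c(t) = \frac{\EE^c(t)}{t+1}$, so the task reduces to showing that the number of colour-$c$ edges grows at least linearly in $t$, with probability $1$. The plan is to extract this linear lower bound not from the global edge count directly, but by exploiting a single well-behaved vertex: since $\EE^c(t) = \frac{1}{2}\sum_{v \leq t} d^c_v(t) \geq \frac{1}{2} d^c_u(t)$ for any fixed vertex $u$, it suffices to produce one vertex whose colour-$c$ degree grows linearly in $t$.

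First I would observe that, under the standing assumption that $G(t_0)$ is neither complete nor edgeless, $G(t_0)$ must contain a standard vertex $u$. Indeed, by Remark \ref{rem:nonstan} no graph can simultaneously contain isolated and universal vertices, so if every vertex of $G(t_0)$ were non-standard then all would be isolated (forcing edgelessness) or all universal (forcing completeness), contradicting the hypothesis. Fixing such a $u$, Proposition \ref{prop:poslimit}(ii) gives $x^c_u > 0$ with probability $1$, while Lemma \ref{lem:xuexists}(iii) gives $d^c_u(t)/t \to x^c_u$.

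On the probability-$1$ event $\{x^c_u > 0\}$, I would choose $t_1 \geq t_0$ large enough that $d^c_u(t) > \frac{1}{2} x^c_u\, t$ for all $t \geq t_1$. Then for such $t$,
\[
t \cdot Y^c(t) = \frac{\EE^c(t)}{t+1} \geq \frac{d^c_u(t)}{2(t+1)} > \frac{x^c_u}{4}\cdot\frac{t}{t+1} \geq \frac{x^c_u}{8},
\]
so setting $\alpha = \frac{x^c_u}{8} > 0$ furnishes the required bound. Note that $\alpha$ and $t_1$ are themselves random, but they are positive (respectively finite) precisely on the probability-$1$ event in play, which is exactly what the statement asserts.

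The argument carries no serious technical obstacle; the one point deserving genuine care is the structural claim that the standing assumption guarantees a standard vertex, since this is what licenses the appeal to Proposition \ref{prop:poslimit}. The reason the single-vertex route suffices is that we only ever need a \emph{lower} bound on $\EE^c(t)$, so there is no need to control the many other vertices whose colour-$c$ degrees might conceivably grow sublinearly.
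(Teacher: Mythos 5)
Your proof is correct and follows essentially the same route as the paper's: fix a standard vertex $u$ of $G(t_0)$, use Proposition \ref{prop:poslimit} to get a positive limiting proportion $x^c_u$, and bound $\EE^c(t)$ below by (a constant times) $d^c_u(t)$ to conclude. The only cosmetic differences are that you invoke part (ii) of Proposition \ref{prop:poslimit} with a random $\alpha = x^c_u/8$ where the paper runs an $\eps$-quantified version via part (i), and you supply the (correct, and tacit in the paper) justification that a standard vertex exists.
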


\begin{proof}
We suppress the superscript $c$ as usual. Notice first that by definition $t \cdot Y(t) =  \frac{1}{t+1} \cdot \EE(t)$.

Let $\eps>0$. We show that suitable $\alpha$ and $t_1$ exist with probability $>1-\eps$. Pick a standard vertex $u$ in $G(t_0)$ and pick $i$ large enough that $2^{-i} < \eps$. Then by Proposition \ref{prop:poslimit}, with probability $>1 - \eps$ we have $x_u \geq \frac{1}{2^i} \cdot \frac{8}{t_0}$. Thus for some large enough $t_1$ and all $t \geq t_1$ we have $d_u(t) > \frac{1}{2^i} \frac{4}{t_0} \cdot t$ and thus $\EE(t) > \frac{1}{2^i} \frac{4}{t_0} \cdot t$ and $\frac{1}{t+1}\EE(t) > \alpha:= \frac{1}{2^i} \frac{2}{t_0}$. 
\end{proof}

\begin{prop} \label{prop:ybound}
$$\E \left( Y(t+1)^2 \big| \big| Y(t) \right) - Y(t)^2 <  \frac{2}{t^3} \cdot Y(t).$$
\end{prop}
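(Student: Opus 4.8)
The plan is to mirror the computation already carried out for the martingale $X(t)$ in Proposition \ref{prop:poslimit}, now applied to the edge-count martingale $Y(t) = \frac{\EE(t)}{t(t+1)}$. The natural route is to expand $\EE(t+1)^2$ in terms of $\EE(t)$ and the increment $d_{t+1}(t+1)$, take conditional expectations using the moment information about $d_{t+1}(t+1)$, and then divide by the deterministic normalising factor $\big((t+1)(t+2)\big)^2$ to recover $Y(t+1)^2$.

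First I would set $W := d_{t+1}(t+1)$, the number of new black (resp. white) edges added at stage $t+1$, and write $\EE(t+1) = \EE(t) + W$, so that
\begin{equation*}
\EE(t+1)^2 = \EE(t)^2 + 2\,\EE(t)\, W + W^2.
\end{equation*}
From the proof of Lemma \ref{lem:anothermart} I already have $\E\big(W \,\big|\big|\, G(t)\big) = \frac{2\EE(t)}{t}$. The one genuinely new ingredient is a bound on $\E\big(W^2 \,\big|\big|\, G(t)\big)$. Since $W = \sum_{u=0}^{t} U_u(t+1)$ is a sum of the independent Bernoulli indicators (the attachment decisions for each previous vertex are made independently), I would write $W^2 = \sum_u U_u + \sum_{u\neq v} U_u U_v$, giving $\E(W^2 \,||\, G(t)) = \frac{2\EE(t)}{t} + \sum_{u\neq v}\frac{d_u(t)}{t}\frac{d_v(t)}{t}$, and then bound the cross term crudely by $\big(\sum_u \frac{d_u(t)}{t}\big)^2 = \big(\frac{2\EE(t)}{t}\big)^2$. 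Assembling these pieces yields an upper bound for $\E\big(\EE(t+1)^2 \,\big|\big|\, G(t)\big)$ of the shape $\EE(t)^2\big(1+\frac{2}{t}\big)^2 + (\text{lower-order terms in }\EE(t))$, where the main quadratic term matches $(\E\,\EE(t+1))^2$ exactly and the remainder is $O\big(\EE(t)\big)$ with constant small enough to produce the claimed $\frac{2}{t^3}$.

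The main obstacle I expect is the bookkeeping: the target bound $\frac{2}{t^3}\,Y(t)$ is genuinely sharp in the power of $t$, so I must track constants carefully rather than absorbing everything into a generic $O(\cdot)$. Specifically, after dividing by $\big((t+1)(t+2)\big)^2$ I need the leading term to cancel $Y(t)^2$ to produce $\E\big(Y(t+1)^2 \,||\, Y(t)\big) - Y(t)^2$, which forces me to compare $\frac{(1+2/t)^2}{((t+1)(t+2)/t(t+1))^2}$ against $1$ and verify it is not larger, and then to check that the surviving linear-in-$\EE(t)$ remainder, once renormalised, is bounded by $\frac{2}{t^3}\,Y(t) = \frac{2\EE(t)}{t^4(t+1)}$. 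Because $\E(W^2) \approx \E(W) + \E(W)^2$ and the $\E(W)^2$ part is exactly what feeds the leading cancellation, the residual is driven by the single $\E(W) = \frac{2\EE(t)}{t}$ term, which is of the right order; the care lies in confirming the numerical constant $2$ suffices for all $t > m \geq t_1 > 3$. Finally, I would note that once this proposition is established it supplies hypothesis (iii) of Proposition \ref{prop:generalmart} for $Y(t)$ (with $A=2$), which together with Lemma \ref{lem:anothermart}(i) and Proposition \ref{prop:lotsofedges} will give $\Prob(y>0)=1$.
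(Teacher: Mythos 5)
Your proposal is correct and follows essentially the same route as the paper: the only cosmetic difference is that you obtain the second-moment bound $\E(W^2\,||\,G(t)) < \frac{2\EE(t)}{t} + \frac{4\EE(t)^2}{t^2}$ by expanding the square of the sum of independent Bernoulli indicators, whereas the paper quotes the Poisson--Binomial variance bound $\sum_u (1-\tfrac{d_u(t)}{t})\tfrac{d_u(t)}{t} < \tfrac{2\EE(t)}{t}$ --- the resulting inequality is identical. The leading term $\EE(t)^2(1+\tfrac{2}{t})^2$ cancels $Y(t)^2$ exactly (not merely ``is not larger''), and the surviving term renormalises to $\frac{2}{(t+1)(t+2)^2}Y(t) < \frac{2}{t^3}Y(t)$, just as in the paper.
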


\begin{proof}
Suppressing the superscript $C$, and writing $\EE$ for $\EE(t)$ and $d$ for $d_{t+1}(t+1)$, we have
\begin{align*}
 & \ \E \left( Y(t+1)^2 \big| \big| Y(t) \right) - Y(t)^2 \\
  = & \  \frac{1}{(t+1)^2(t+2)^2}\E \left( \left( \EE + d \right)^2 \big| \big| Y(t) \right) - \frac{1}{t^2(t+1)^2}\EE^2\\
   = & \  \frac{1}{ t^2(t+1)^2(t+2)^2} \Big( t^2 \left( \EE^2 + 2 \EE \E \left( d \big| \big| Y(t) \right) \right)  + \E \left( d^2 \big| \big| Y(t) \right) - (t+2)^2 \EE^2 \Big).  
\end{align*}

Now, conditioning on $\EE(t)$, the random variable $d=d_{t+1}(t+1)$ has a Poisson-Binomial distribution with expectation $\frac{2 \EE (t)}{t}$ and variance $$\sum_{u=0}^t \left( 1 - \frac{d_u(t)}{t} \right) \frac{d_u(t)}{t} <
\sum_{u=0}^t \frac{d_u(t)}{t} = \frac{2 \EE (t)}{t}\\$$
meaning that \begin{align*}
\E \left( d^2 \big| \big| Y(t) \right) & =  \E \left( d \big| \big| Y(t) \right)^2 + \textbf{Var} \left( d \big| \big| Y(t) \right)\\
& < \frac{4 \EE^2}{t^2} + \frac{2 \EE}{t} .
\end{align*}

Thus 
\begin{align*}
 & \E \left( Y(t+1)^2 \big| \big| Y(t) \right) - Y(t)^2 \\
 & \ \ \ <  \ \  \frac{1}{t^2(t+1)^2(t+2)^2} \Big( t^2 \left( \EE^2 + \frac{4 \EE^2}{t} + \frac{4 \EE^2}{t^2} + \frac{2 \EE} {t}  \right) - (t+2)^2 \EE^2 \Big)\\
 & \ \ \ =   \ \   \frac{2 t \EE}{t^2(t+1)^2(t+2)^2} \\
 & \ \ \ =  \ \  \frac{2}{(t+1)(t+2)^2} \cdot Y(t) \\
& \ \ \ <  \ \ \frac{2}{t^3} \cdot Y(t). \end{align*} \end{proof}

\begin{prop} \label{prop:anothermartpos}
Given any state of the process $G(t_0)$ and $c \in \{b,w\}$, with probability $1$ we have $y^c > 0$. 
\end{prop}

\begin{proof}
As discussed above, we proceed using Proposition \ref{prop:generalmart}. The first hypothesis comes from Lemma \ref{lem:anothermart} and the second from Proposition \ref{prop:lotsofedges}. The third follows from Proposition \ref{prop:ybound} above:
we may pick $A$ as we like, say $A=\frac{\alpha}{16}$. By increasing $t_1$ by some predictable amount, we obtain that for $t \geq t_1$ we have 
\begin{equation} \label{equation:squaresagain}
\E \left( Y(t+1)^2  - Y(t)^2 \big| \big| Y(t) \right) < \frac{A}{t^2} \cdot Y(t).
\end{equation}
The law of total expectation now gives the third hypothesis, and the fourth is immediate from our choice of $A$, giving the result.
\end{proof}

The following result completes the proof of Theorem \ref{thm:main}:

\begin{prop} \label{prop:nonstand}
Given any state $G(t_0)$, the number of non-standard vertices in $G(\infty)$ will be finite with probability 1.
\end{prop}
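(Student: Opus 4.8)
The plan is to show that a non-standard vertex (one that is isolated or universal) can only be created at stage $t+1$ with a probability that is summable in $t$, so that by Borel–Cantelli only finitely many such vertices ever appear. Recall from Remark \ref{rem:nonstan} that a vertex $t+1$ is non-standard precisely when, at the moment it is introduced, it either connects to every previous vertex (with a black edge) or to none of them. So I would compute the probability of each of these two events and show their sum over $t$ converges.

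First I would observe that the event that the new vertex $t+1$ receives \emph{no} black edge has probability $\prod_{u=0}^{t} \left( 1 - \frac{d^b_u(t)}{t} \right) = \prod_{u=0}^t \frac{d^w_u(t)}{t}$, and symmetrically the probability it receives \emph{no} white edge (i.e.\ becomes universal in black) is $\prod_{u=0}^t \frac{d^b_u(t)}{t}$. The key is to bound the product $\prod_{u=0}^t \frac{d^b_u(t)}{t}$ in terms of the total black edge count $\EE^b(t)$. By the AM–GM inequality, $\prod_{u=0}^t \frac{d^b_u(t)}{t} \leq \left( \frac{1}{t+1} \sum_{u=0}^t \frac{d^b_u(t)}{t} \right)^{t+1} = \left( \frac{2 \EE^b(t)}{t(t+1)} \right)^{t+1}$. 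Now I would invoke Proposition \ref{prop:anothermartpos}: with probability $1$ we have $y^b > 0$, so $\EE^b(t) \sim y^b \cdot t(t+1)$, meaning $\frac{2\EE^b(t)}{t(t+1)} \to 2y^b$. The same applies to white via $y^w$. Provided $2y^b < 1$ (and $2y^w < 1$), these probabilities decay geometrically and are certainly summable.

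The hard part will be handling the boundary case where $2y^b$ or $2y^w$ is close to or exceeds $1$, since AM–GM alone does not then give geometric decay. However, I would argue that $y^b + y^w = 1$ (the black and white edge counts partition all pairs, so $\EE^b(t) + \EE^w(t) = \binom{t+1}{2}$, giving $Y^b(t) + Y^w(t) \to 1$ after accounting for the normalisation), so at most one of $2y^b, 2y^w$ can be $\geq 1$. Thus at least one colour always gives a summable bound. To handle both colours simultaneously I would work conditionally on the almost-sure event that both $y^b > 0$ and $y^w > 0$: pick a time $t_1$ beyond which $\frac{2\EE^b(t)}{t(t+1)}$ and $\frac{2\EE^w(t)}{t(t+1)}$ are each bounded away from their limits, and note that the AM–GM bound $\left( \frac{2\EE^c(t)}{t(t+1)} \right)^{t+1}$ is summable whenever the base is bounded below $1$; since $y^b, y^w$ are strictly positive and strictly less than $1$ (as each is positive and they sum to $1$), both bases are eventually bounded strictly below $1$, and both series converge.

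Finally I would assemble the argument: conditioning on the probability-$1$ event from Proposition \ref{prop:anothermartpos} that $y^b, y^w \in (0,1)$, the probability that vertex $t+1$ is non-standard is at most $\left( \frac{2\EE^b(t)}{t(t+1)} \right)^{t+1} + \left( \frac{2\EE^w(t)}{t(t+1)} \right)^{t+1}$, which is summable in $t$. By the Borel–Cantelli lemma, with probability $1$ only finitely many vertices are non-standard, which is the claim. I expect the only real subtlety to be making the ``eventually bounded strictly below $1$'' step rigorous uniformly in $t$, which requires choosing $t_1$ after the limits $y^b, y^w$ are realised — a standard conditioning manoeuvre given the almost-sure convergence already established.
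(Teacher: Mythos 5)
Your approach is correct in outline but takes a genuinely different route from the paper's at the key step. Both arguments rest on Proposition \ref{prop:anothermartpos} (that $y^c>0$ almost surely for each colour), and both must then upgrade that asymptotic statement to a bound holding for all but finitely many $t$ before invoking Borel--Cantelli; the paper spends most of its proof on precisely this (the $\mathcal{L}_2$ estimate, Markov's inequality, and the event ${\bf \Gamma}$), and your ``standard conditioning manoeuvre'' is the same nontrivial work, correctly flagged rather than carried out. Where you genuinely differ is the final estimate: the paper bounds $\Prob\left(d_{t+1}(t+1)=0 \, \big|\big| \, \EE(t)\geq\gamma t^2\right)$ via Hoeffding's inequality, obtaining $e^{-\gamma^2 t}$, whereas you exploit the exact product formula for the probability of attaching to no vertex of a given colour and apply AM--GM to get $\left(2Y^c(t)\right)^{t+1}$. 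Your route is more elementary and self-contained, and it buys an explicit geometric bound without any concentration inequality; the paper's Hoeffding bound is no stronger here.

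One slip needs repairing. You assert $y^b+y^w=1$ and then infer that, since each is positive and they sum to $1$, both bases $2y^b,2y^w$ lie below $1$ --- but that inference is a non sequitur: if the limits really summed to $1$, at least one of $2y^b,2y^w$ would be $\geq 1$ and one colour's series could fail to converge. The correct normalisation rescues you: $\EE^b(t)+\EE^w(t)=t(t+1)/2$ and $Y^c(t)=\EE^c(t)/(t(t+1))$, so $Y^b(t)+Y^w(t)=\tfrac{1}{2}$ identically, hence $2Y^b(t)+2Y^w(t)=1$ and $2y^b+2y^w=1$. Once both limits are positive, both bases converge to values strictly below $1$, both geometric series converge, and (conditional) Borel--Cantelli finishes the proof. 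So the argument is sound, but only after this arithmetic is set right; as literally written the crucial ``both bases eventually bounded strictly below $1$'' claim is unsupported.
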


\begin{proof} 

Let $\eps >0$ and let ${\bf \Delta}$ be the event $\big| \{ t \ : \ d_{t}(t) = 0 \} \big| < \infty$. Our goal is to show that $\Prob \left( {\bf \Delta } \right) \geq 1 - \eps.$ (This and everything that follows is conditioned upon $G(t_0)$, which we suppress.)

First, by Proposition \ref{prop:anothermartpos}, there exists $\gamma>0$ such that \begin{equation} \label{equation:nonstbound1}
\Prob(y<2\gamma)< \eps.\end{equation} (The 2 is included simply for our convenience.) 
Next, by Proposition \ref{prop:ybound} above and the law of total expectation, for any $t \geq t_1 \geq t_0$, we have $$\E \left( Y(t+1)^2 - Y(t)^2 \Big| \Big| Y(t_1) \right) < \frac{2}{t^3}.$$
Summing successive expressions, by the linearity of expectation, there is some $C>0$, so that for all $T \geq t$ we see
$$\E \left( Y(T+1)^2 - Y(t)^2 \Big| \Big| Y(t_1) \right) < \frac{C}{t^2}.$$
Thus, by linearity and Remark \ref{rem:L22}, 
\begin{equation}
\E \left( y^2 - Y(t)^2 \Big| \Big| Y(t_1) \right) < \frac{C}{t^2}. \label{equation:ybound2} \end{equation}
Furthermore, by Remark \ref{rem:L22} again, $\E \left( y  \ \big| \big| \ Y(t) \right) = Y(t)$ and thus 
$$\E \left( \left( y - Y(t) \right)^2 \ \big| \big| \ Y(t) \right) = \E \left( y^2 - Y(t)^2 \ \big| \big| \ Y(t) \right)$$
and so by the law of total expectation and Bound (\ref{equation:ybound2})
$$\E \left( \left( y - Y(t) \right)^2 \right)  < \frac{C}{t^2}.$$
Thus by Markov's Theorem, for any $\delta>0$, 
$$\Prob \left( | y - Y(t)|> \delta   \right)< \frac{C}{\delta^2 t^2}.$$ Since $\sum_{t=t_0}^\infty \frac{C}{\delta^2 t^2} < \infty$, by the Borel-Cantelli Lemma, the number of $t$ for which it holds that $| y - Y(t)|> \delta$ will be finite with probability 1.

Taking $\delta = \gamma$ and noticing that $\EE (t)<\gamma t^2  \Rightarrow Y(t)< \gamma$, define
${\bf \Gamma}$ to be the event $\big| \{ t \ : \ \EE (t)<\gamma t^2 \  \} \big| < \infty.$
Then \begin{equation} \label{equation:nonstbound2} \Prob \left( {\bf \Gamma} \; \big| \big| \; y \geq 2 \gamma \right) = 1. \end{equation}

Also, for any $\xi>0$ (the case $\xi = \gamma$ is of primary interest),
\begin{align*}
\E \Big( d_{t+1}(t+1) \Big| \Big| \EE(t) \geq \xi t^2 \Big) & =\E \left( \sum_{u=0}^t U_u(t) \Big| \Big| \EE(t) \geq \xi t^2 \right)\\ 
& = \E \left( \sum_{u=0}^t \frac{d_u(t)}{t} \Big| \Big| \EE(t) \geq \xi t^2 \right)\\ 
& = \frac{2 \cdot \E \left( \EE(t) \big| \big| \EE(t) \geq \xi t^2 \right) }{t} \geq 2 \xi t. \end{align*}

Since each $U_u(t) \in [0,1]$, applying Hoeffding's inequality gives
\begin{align*}
& \ \Prob \left( d_{t+1}(t+1) = 0 \ \Big| \Big| \ \EE(t) \geq \xi t^2 \right) \\
\leq & \ \Prob \Big( \E\left(d_{t+1}(t+1) \right) - d_{t+1}(t+1)  > \xi t  \ \Big| \Big| \  \EE(t) \geq \xi t^2 \Big) \\ 
 \leq & \ e^{-\frac{2 \xi^2 t^2}{t+1}} < e^{-\xi^2 t}.
\end{align*}

Furthermore, clearly $\E \left(  y \ \big| \big| \ \EE(t) \geq  \gamma t^2 \right) \geq \E \left(  y \right)$, and thus $\Prob \left(  y \geq 2 \gamma \ \big| \big| \ \EE(t) \geq  \gamma t^2 \right) \geq \Prob \left(  y \geq 2 \gamma \right)$. Hence by Bayes' Theorem, 
\begin{align*}
\Prob \left(\EE(t) \geq  \gamma t^2 \ \Big| \Big| \ y \geq 2 \gamma \right) & = \frac{\Prob \left(  y \geq 2 \gamma \ \big| \big| \ \EE(t) \geq  \gamma t^2 \right)}{\Prob \left(  y \geq 2 \gamma \right)} \cdot \Prob \left(\EE(t) \geq  \gamma t^2 \right) \\
& \geq \Prob \left( \EE(t) \geq \gamma t^2 \right).
\end{align*}
By (\ref{equation:nonstbound2}) $$\Prob \left( \EE(t) \geq \gamma t^2 \ \Big| \Big| \ {\bf \Gamma} \; \& \; y \geq 2 \gamma \right) = \Prob \left(\EE(t) \geq  \gamma t^2 \ \Big| \Big| \ y \geq 2 \gamma \right)$$ so 
$$ \Prob \left( \EE(t) \geq \gamma t^2 \ \Big| \Big| \ {\bf \Gamma} \; \& \; y \geq 2 \gamma \right) \geq \Prob \left( \EE(t) \geq \gamma t^2 \right).$$ 
Likewise
\begin{align*}\Prob \Big( d_{t+1}(t+1) = 0 \ \Big| \Big| \ \EE(t) \geq \gamma t^2 \ & \& \ {\bf \Gamma}  \; \& \; y \geq 2 \gamma \Big) \\ & \leq \Prob \left( d_{t+1}(t+1) = 0 \ \Big| \Big| \ \EE(t) \geq \gamma t^2 \right) \\ 
& < e^{-\gamma^2 t}.
\end{align*}
Now $\sum_{t=t_0}^{\infty} e^{-\gamma^2 t}$ converges to a finite limit, and thus by the Borel-Cantelli Lemma again, we see that \begin{equation} \label{equation:nonstbound3} \Prob \left({\bf \Delta} \ \big| \big| \  {\bf \Gamma} \;\& \; y \geq 2 \gamma \right) =1. \end{equation} The result follows from (\ref{equation:nonstbound1}), (\ref{equation:nonstbound2}), and (\ref{equation:nonstbound3}). 
\end{proof}

\section{Closing Comments on Generalisations} \label{section:close}

The current work considers only one model (modulo the initial graph), so it is natural to ask whether the result generalises to related models, such as one with attachment probability $\lambda \cdot \frac{d}{t}$ for some $\lambda \in (0,1]$. We offer an informal argument that it is only in the case $\lambda = 1$ that the infinite limit will be the Rado graph.

Much of the preceding theory goes through. In particular, one can define a Martingale: 
$$X_u(t):=\frac{d(t)}{u \cdot \prod_{j=u}^{t-1} \left( 1 + \frac{\lambda}{j} \right)}.$$
Considering the limit of this Martingale and the characterisation of the $\Gamma$-function as $\Gamma(\lambda):= \lim_{t \to \infty} \frac{t^\lambda}{\prod_{j=0}^t (1+\frac{\lambda}{j})}$ one may find $C \geq 0$ so that for all large enough $t$, we have $d(t) \leq C \cdot t^\lambda.$

Now consider (as per the proof of Proposition \ref{prop:proof1}) a witness-request $(U,V)$, where $|U| = n$. (For simplicity, we can take $V= \emptyset$.) For all large enough $t$, the probability of a suitable witness appearing at time $t$ is bounded above by $c \cdot t^{(\lambda -1) n}$ for some $c>0$. Thus, omitting the intricacies of various conditional probability calculations, it is only in the case $\lambda  =1$ that we expect to be able to guarantee the eventual appearance of such a witness. In particular, among the infinitely many witness requests $(U,V)$ where $|U|> \frac{1}{1 - \lambda}$, with probability 1 at least one will fail to be satisfied, making the limit non-isomorphic to an augmented Rado graph.

\end{document}